\documentclass[12pt,dvips]{article}

\usepackage{amsmath,amsfonts,amssymb,amsthm,graphicx,verbatim,subfig}
\usepackage[all]{xy}

\ifx\pdftexversion\undefined


\usepackage[a4paper,colorlinks,linkcolor=black,filecolor=black,citecolor=black,urlcolor=black,pdfstartview=FitH]{hyperref}
\fi

\font\sixbb=msbm6
\font\eightbb=msbm8
\font\twelvebb=msbm10 scaled 1095
\newfam\bbfam
\textfont\bbfam=\twelvebb \scriptfont\bbfam=\eightbb
                           \scriptscriptfont\bbfam=\sixbb
\def\bb{\fam\bbfam\twelvebb}
\newcommand{\Rea}{{\bb R}}


\newtheorem{theorem}{\bf Theorem}[section]
\newtheorem{claim}[theorem]{\bf Claim}

\newtheorem{lemma}{Lemma}

\newcommand{\beq}[0]{\begin{equation}}
\newcommand{\enq}[0]{\end{equation}}

\newcommand{\prob}{{\rm Pr}}



\newcommand{\namedref}[2]{\hyperref[#2]{#1~\ref*{#2}}}

\begin{document}
\title{The threshold for collapsibility in random complexes}
\author{L. Aronshtam\thanks{Department of Computer Science, Hebrew University, Jerusalem 91904,
    Israel. e-mail: liobsar@gmail.com~.} \and  N. Linial\thanks{Department of Computer Science, Hebrew University, Jerusalem 91904,
    Israel. e-mail: nati@cs.huji.ac.il~. Supported by ISF and BSF grants.}
}

\maketitle

\begin{abstract}
In this paper we determine the threshold for collapsibility in the probabilistic model $X_d(n,p)$ of $d$-dimensional simplicial complexes. A lower bound for this threshold $p=\frac{c_d}{n}$ was established in \cite{ALLM}. Here we show that this is indeed the correct threshold. Namely, for every $c>c_d$, a complex drawn from $X_d(n,\frac{c}{n})$ is asymptotically almost surely not collapsible.
\end{abstract}
Keywords: Random simplicial complex, threshold, collapsibility, topology.
\section{A soft introduction}\label{intro}
We work with the probability space $X_d(n,p)$ of $n$-vertex $d$-dimensional simplicial complexes introduced in \cite{LM}. Such a complex $X$ has a full $(d-1)$-dimensional skeleton, and each $d$-dimensional face is taken to be in $X$ independently and with probability $p$. A $(d-1)$-dimensional face $\tau$ of $X$ is {\it isolated} if it is not contained in any $d$-face. The $(d-1)$-face $\tau$ is {\it free} if it is contained in exactly one $d$-dimensional face $\sigma$ of $X$. In this case there is a corresponding {\em elementary collapse} step, in which $\tau$ and $\sigma$ are removed from $X$. We say that $X$ is {\em collapsible} if it is possible to eliminate all its $d$-faces by a series of elementary collapses. It is useful to note that a graph (i.e., a $1$-dimensional complex) is collapsible iff it is a forest. A graph from $G(n,p)$ is a forest with probability $o_n(1)$ for $p>\frac 1n$ and the bound is tight. Establishing the threshold for collapsibility in $X_d(n,p)$ can be viewed as the $d$-dimensional analog of this fact. This problem was previously investigated in \cite{CFK10,ALLM}. The present article shows that the bound established in \cite{ALLM} is tight.

We are inspired by work on cores in random graphs and in particular by Molloy's work~\cite{Molloy}. The basic strategy of the proof is this: In~\cite{ALLM} we found a constant $c_d$ and proved that if $c' < c_d$, then asymptotically almost surely (=a.s.) a random complex in $X_d(n, \frac{c'}{n})$ is either collapsible or it contains the boundary of a $(d+1)$-dimensional simplex. Now we want to prove that for $c > c_d$, a random complex in $X_d(n, \frac{c}{n})$ is a.s. not collapsible. In our analysis, we split the collapsing process to two {\em epochs} as follows. Fix a positive integer $r$ (to be specified below) and start by carrying out $r$ {\em phases} of collapses. In each phase we simultaneously collapse on {\em all} $(d-1)$-faces that are presently free. After $r$ such phases we move on to the second epoch of the process in which we collapse free $(d-1)$-faces one by one. We say two $(d-1)$-faces are neighbors if there exists a $d$-face that contains both of them. The order in which we collapse in the second epoch is the following: (i) Mark all the free faces in the complex, if there exists two or more neighboring faces that are free, mark only one of them. (ii) Sample uniformly a permutation $\pi$ on all the marked faces. (iii) Collapse the free $(d-1)$-faces one by one by the order of $\pi$. (iiii) Repeat stages (i)-(iii) until there are no free $(d-1)$-faces left. A face that gets marked during the $k$-th run through stages (i)-(iii) is said to have {\em mark} $k$.

If the integer $r$ is large enough, then the results from~\cite{AL} give us a good idea about several relevant random variables. Let us denote by $\Delta(\tau)$ the degree of the $(d-1)$-face $\tau$ (i.e., the number of $d$-faces that contain $\tau$) at the end of the first epoch. For a given $d$-face $\sigma$ we let $\Delta^{\sigma}(\tau):=\Delta(\tau)-1$ (resp. $\Delta^{\sigma}(\tau):=\Delta(\tau)$) if $\tau\subset\sigma$ (resp. $\tau\not\subset\sigma$). The random variable $X_0$ counts the number of $(d-1)$-faces $\tau$ that are free, i.e., $\Delta(\tau)=1$ at the end of the first epoch. Also, $L=L_r$ is the number of $(d-1)$-faces $\tau$ for which $\Delta(\tau)>0$. We denote by $X_i$ the number of free $(d-1)$-faces at step $i$ of the second epoch. Our plan is to show that the expected drop in this number $\mathbb{E}(X_i-X_{i+1})$ is sufficiently large to a.s. guarantee that at some moment the complex still has some $(d-1)$-faces, but none of them are free. This clearly means that the original complex is non-collapsible.

The main ingredient in our proof is then, a detailed accounting of the free $(d-1)$-faces. In every step this number gets decreased by one due to the loss of the free $(d-1)$-face that we are presently collapsing. What complicates matters is that as we carry out the collapse step we may be creating some new free $(d-1)$-faces. We define the event $S^{j}_i$ that our $i$-th collapsing step creates $j$ new free $(d-1)$-faces. Note that $0\le j\le d$, and in particular $X_i \le X_{i-1}+j-1$. We further introduce the random variable $Y_i$ that counts the number of new free $(d-1)$-faces added in the $i$-th collapsing step.

Clearly
\begin{equation}\label{big_plan}
X_i=X_{i-1}-1+Y_i=X_0-i+\sum_{l=1}^{i}Y_l
\end{equation} 
To recap: When $X_i=0$, we run out of free faces, and if this happens before the complex becomes empty, the original complex is non-collapsible.

We now recall a basic technique from~\cite{AL}. A $d$-tree has the following recursive definition: (i) A single $d$-face is a $d$-tree. (ii) A $d$-tree with $n$ faces is constructed by taking a $d$-tree with $n-1$ faces $T$, choosing a $(d-1)$-face $\tau$ in $T$, and a new vertex $u$, and adding $\tau\cup  u$ and all its subfaces to $T$. A $d$-tree is a rooted $d$-tree in which one $(d-1)$-face is designated to be the root. Two neighboring $(d-1)$-faces have distance $1$ and so we can talk about the distance between every two $(d-1)$-faces in a $d$-tree. We consider the probability space, ${\cal T }(c,t)$, of rooted $d$-trees of radius at most $t$  (i.e every $(d-1)$-face in the tree  is at distance at most $t$ from the root). The probability space ${\cal T } (c,0)$ clearly contains only the $d$-tree that is just the root.  A tree is sampled from ${\cal T }(c,t)$ by (i) Sampling a tree $T$ from ${\cal T }(c,t-1)$ (ii) For every $(d-1)$-face, $\tau$ in $T$, at distance $t-1$ from the root, create $l$ new vertices $v_1,\dots,v_l$ and add the $d$-faces $\{\tau\cup v_1,\dots,\tau\cup v_l\}$ and their subfaces to $T$. The integer $l$ is sampled from the Poisson distribution with parameter $c$. We are interested in $\tau$-collapsing of $d$-trees where we collapse the faces of the $d$-tree {\em but not the root} $\tau$.

As shown in~\cite{AL}, for every $(d-1)$-face $\tau$ in $X\in X_d(n,\frac cn)$ the $t$-th neighborhood of $\tau$ is a.s a $d$-tree. Moreover, the distributions of such $d$-trees is very close to the distribution of ${\cal T }(c,t)$ of $d$-trees rooted at $\tau$. This simplifies the analysis of the $\tau$-collapse process on a local scale, where we run the collapse phases as usual, except that we {\em forbid collapsing} on $\tau$. We denote by $\gamma_t(c)$ (resp. $\beta_t(c)$) the probability that $\tau$ becomes isolated in fewer than (resp. more than) $t$ collapse phases. Obviously $\beta_t(c) = 1-\gamma_{t+1}(c)$. The following relations are proved in~\cite{AL}:
\[\gamma_0(c)=0,~~~\gamma_{t+1}(c)=\exp(-c(1-\gamma_t(c)^d))\mbox{~~for~~}t=0,1,\ldots.\]
To simplify notations, and as long as everything is clear, we suppress the dependence on $c$ and write $\gamma_t,\beta_t$ rather than $\gamma_t(c),\beta_t(c)$.

We now use this model and consider a normal run of the first epoch with the change that we forbid collapsing on $\tau$ (i.e a $\tau$-collapse). Denote by $A_k^{\tau}$ the event (in this modified process) that $\tau$ has degree $k$ at the end of the first epoch. For $k \ge 0$
\begin{eqnarray}\label{A_k}
\Pr(A_k^{\tau})=\sum_{j=k}^{\infty}\frac{c^j}{j!}e^{-c}{j\choose k} (1-(1-\gamma_{r})^d)^{j-k}(1-\gamma_{r})^{kd}=\nonumber
\frac{((\beta_{r-1})^{d}c)^k}{k!}\gamma_{r+1}~\nonumber.
\end{eqnarray}

Consequently, the degree of ${\tau}$ is Poisson distributed with parameter $(\beta_{r-1})^{d}c$.

\section{In detail}
As mentioned, the first epoch proceeds for $r$ phases. At this point the number of $(d-1)$-faces of positive degree in the complex is $L_{r}$. As shown in \cite{AL}:
$$
\mathbb{E}[L_r]=(1-o(1)){n \choose d} (1-(\gamma_{r+1}+c\gamma_r\beta^d_{r-1})).
$$
A concentration of measure argument similar to the one used in \cite{AL} yields that a.s $|\mathbb{E}[L_r]-L_r|\le o(n^d).$

For a $(d-1)$-face to be free at the end of the first epoch, it must become free only in the last collapsing phase. Therefore the probability for a $(d-1)$-face to be free in the beginning of the second epoch is:
\begin{eqnarray}\label{X_0}
& &\sum_{j=1}^{\infty}\frac{c^j}{j!}e^{-c}j (1-(1-\gamma_{r})^d)^{j-1}(1-\gamma_{r})^{d}-\sum_{j=1}^{\infty}\frac{c^j}{j!}e^{-c}j (1-(1-\gamma_{r-1})^d)^{j-1}(1-\gamma_{r})^{d}=\nonumber\\
& &  (\beta_{r-1})^{d}c\gamma_{r+1}-(\beta_{r-1})^{d}c\gamma_{r}=(\beta_{r-1})^{d}c(\gamma_{r+1}-\gamma_{r})~\nonumber.
\end{eqnarray}
Consequently, the expectation of $X_0$ is:
\begin{eqnarray}\label{expx_0}
& & \mathbb{E}[X_0]={n\choose d}(\beta_{r-1})^{d}c(\gamma_{r+1}-\gamma_{r}).
\end{eqnarray}

Let $\sigma\supset\tau$ be faces that we collapse at some step of the second epoch. The $(d-1)$-faces {\it affected} by that step are all the $(d-1)$-subfaces of $\sigma$ other than $\tau$. As mentioned, we control the number of free $(d-1)$-faces throughout the process. Clearly the change in this number at a given step is determined by the current degrees of the affected faces. We consider the count of steps in the second epoch as ``time". In particular, time zero means the end of the first epoch and the beginning of the second one.

Let $D_k$ be the random variable that counts the $(d-1)$-faces $\tau$ that have degree $k$ at time zero of the $\tau$-process. Clearly
\begin{eqnarray}\label{D_k}
& & \mathbb{E}[D_k]=(1-o(1))\sum_{\tau}\Pr(A_k^{\tau})=(1-o(1)){n\choose d}\frac{((\beta_{r-1})^{d}c)^k}{k!}\gamma_{r+1}.
\end{eqnarray}
Let $B_j$ be the random variable that counts $(d-1)$-faces which are not isolated after the $j$-th collapsing phase of the first epoch. Obviously  $\mathbb{E}[B_j]=(1-o(1)){n\choose d}\beta_{j}$. Again a concentration of measure argument in the spirit of \cite{AL} yields that a.s $|\mathbb{E}[D_k]-D_k|\le o(n^d)$ and $|\mathbb{E}[B_j]-B_j|\le o(n^d)$.

Let $\sigma\supset\tau$ be the faces that we collapse at time $i$. We are interested in the event that some affected $(d-1)$-face $\tau'$ becomes free due to this step. This can happen if $\Delta^{\sigma}(\tau')=k>1$ and each of the other $k-1$ $d$-faces that contains $\tau'$ gets collapsed in time $<i$. The other way in which this can happen is the event $C^{\sigma,\tau'}$ that $\Delta^{\sigma}(\tau')=1$.  

We next define the random variable $D'_k$ exactly as $D_k$, except that ``degree" is interpreted a little differently. Namely a $(d-1)$-face of degree $s$ that is contained in $\sigma$ is considered as having degree $s-1$. Clearly $D_k\ge D'_k\ge D_k-(d+1)$ and the same concentration of measure arguments work for both random variables. Since $\sigma$ was not collapsed in the first epoch, we know that $\tau'$ was not collapsed, hence the degree of $\tau'$ in $X\setminus\sigma$ after the first epoch is the degree in $X\setminus\sigma$ if we forbid collapsing on $\tau'$. 

Let us consider the $d$-tree that consists of the $r$-neighborhood of $\tau'$ barring the $d$-face $\sigma$. Before the last phase of the first epoch, $\tau'$ does not become isolated in this tree. Otherwise $\tau'$ would have become a free face of $\sigma$ so that $\sigma$ would have been collapsed in the first epoch. Therefore  
\begin{eqnarray}\label{C}
\Pr(C^{\sigma,\tau'})=\frac{\mathbb{E}[D'_1]}{\mathbb{E}[B_{r-1}]}=(\beta_{r-1})^{d-1}c\gamma_{r+1}.
\end{eqnarray}

We denote this probability by $x:=(\beta_{r-1})^{d-1}c\gamma_{r+1}$.

Let ${\cal{Q}}^{\tau'}_i$ be the event that $i$ is the first time in which $\tau'$ is an affected face. The number of $(d-1)$-faces that get affected before time $i$ cannot exceed $(i-1)d$. Therefore,
\begin{eqnarray}\label{Q^{t}_i}
\Pr({\cal{Q}}^{\tau'}_i)\ge 1-\frac{(i-1)d}{\beta_{r-1}{n\choose d}}
\end{eqnarray}

We define ${\cal{Q}}_i$ as the event that for each face that is affected in step $i$ this is the first time that it gets affected. We prove

\begin{lemma}\label{Q_i}
For every $i$ there holds
$$\Pr({\cal{Q}}_i)\ge \prod_{\tau_j\in T}\Pr({\cal{Q}}^{\tau_j}_i).$$
Here $T=\{\tau_1\dots,\tau_d\}$ is the set of faces that are affected in collapsing step $i$. 
\end{lemma}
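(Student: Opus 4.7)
I would prove the lemma as a positive-correlation (FKG-type) statement for the events $\{{\cal Q}^{\tau_j}_i:\tau_j\in T\}$. After conditioning on the entire state of the complex at the end of the first epoch, the only remaining randomness lies in the sequence of uniform permutations that drive the second-epoch collapses. Let $S_{i-1}$ denote the random set of $d$-faces collapsed during steps $1,\ldots,i-1$ of the second epoch. Then ${\cal Q}^{\tau_j}_i$ is precisely the event that $S_{i-1}\cap(N(\tau_j)\setminus\{\sigma\})=\emptyset$, where $N(\tau_j)$ is the set of $d$-faces containing $\tau_j$. The key combinatorial observation is that for distinct $\tau_j,\tau_k\in T$ the sets $N(\tau_j)\setminus\{\sigma\}$ and $N(\tau_k)\setminus\{\sigma\}$ are disjoint, because any $d$-face containing two distinct $(d-1)$-subfaces of $\sigma$ must equal $\sigma$. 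Hence each ${\cal Q}^{\tau_j}_i$ is a decreasing event in $S_{i-1}$ whose support lies in a disjoint block of $d$-faces.

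To get the product lower bound, I would then perform an iterated monotone coupling. Given any set $A$ of $d$-faces, couple the second-epoch process with a parallel ``frozen'' process in which the $d$-faces of $A$ are declared uncollapsable, arranging things so that at every step the collapsed set of the frozen process is contained in $S_{i-1}$. Applied with $A=N(\tau_k)\setminus\{\sigma\}$, this coupling shows that conditioning on ${\cal Q}^{\tau_k}_i$ (i.e.\ restricting to runs in which no $d$-face of $A$ is ever collapsed by step $i-1$) can only shrink the restriction of $S_{i-1}$ to the disjoint supports of the remaining $\tau_j$'s. In particular, such conditioning can only increase $\Pr({\cal Q}^{\tau_j}_i)$ for $j\neq k$. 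Iterating this conditioning across all faces of $T$ yields
$\Pr({\cal Q}_i)=\Pr\bigl(\bigcap_{\tau_j\in T}{\cal Q}^{\tau_j}_i\bigr)\ge\prod_{\tau_j\in T}\Pr({\cal Q}^{\tau_j}_i)$.

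\textbf{Main obstacle.} The hard part is carrying the monotone coupling through the re-marking step between consecutive rounds of the second epoch. Freezing the $d$-faces in $A$ may prevent some $(d-1)$-face from becoming free when it otherwise would have, which in turn perturbs the pool of marked faces and the random permutation used in subsequent rounds. Verifying that the domination $S_{i-1}^{\mathrm{frozen}}\subseteq S_{i-1}$ survives these re-markings requires a round-by-round inductive construction, matching marked faces across the two processes and drawing the shared permutations so that the frozen ordering agrees with the unconstrained one on all non-frozen faces. This is exactly the place where the disjoint-support observation gets used: as long as $A$ is disjoint from the incidence neighborhoods of the $\tau_j$ of interest, the forced absence of collapses in $A$ does not spuriously create or destroy free faces near those $\tau_j$, and the induction closes.
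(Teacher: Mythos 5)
There is a genuine gap here: the entire argument rests on the monotone coupling $S^{\mathrm{frozen}}_{i-1}\subseteq S_{i-1}$, which you yourself flag as the ``main obstacle'' and never construct. This is not a routine verification that can be deferred. First, freezing the faces of $A$ changes which $(d-1)$-faces get marked in each round, so the two processes run on permutations of different ground sets and ``step $i-1$'' does not refer to comparable moments in the two runs; aligning the time indices is exactly where the argument would have to do real work. Second, and more seriously, the underlying heuristic --- decreasing events supported on disjoint blocks of $d$-faces are positively correlated --- is simply not valid for a process of this type. Exactly one $d$-face is removed per step, so $|S_{i-1}|=i-1$ is deterministic, and for a random set of fixed size the events $\{S_{i-1}\cap A_j=\emptyset\}$ on disjoint $A_j$ are (by negative association) typically \emph{negatively} correlated: conditioning on one block being avoided pushes the collapses into the other blocks. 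Disjointness of the supports $N(\tau_j)\setminus\{\sigma\}$, while correct, therefore does not by itself deliver the inequality in the direction you need, and an FKG-type appeal cannot be made without identifying some additional structural reason for positive correlation.

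The paper's proof finds that reason elsewhere, and it is much more elementary: it uses the marking scheme. If any $\tau_j$ has a neighbor with mark smaller than the mark $k$ of the face $\tau$ collapsed at step $i$, both sides vanish. Otherwise each event ${\cal{Q}}^{\tau_j}_i$ is determined by the single uniform permutation of the mark-$k$ faces, and it is exactly the event that $\tau$ precedes all $\alpha_j-1$ other mark-$k$ neighbors of $\tau_j$; hence $\Pr({\cal{Q}}^{\tau_j}_i)=1/\alpha_j$, while the intersection is the event that $\tau$ precedes all of them simultaneously, which has probability at least $1/\sum_J\alpha_j\ge\prod_J 1/\alpha_j$. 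The positive correlation thus comes from the fact that all the events share the common requirement that the \emph{same} face $\tau$ win its race --- a feature of this specific situation that your coupling framework does not capture. To salvage your approach you would either have to prove the coupling round by round (including the re-marking), or abandon it in favor of the permutation argument.
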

\begin{proof}
Let $\tau$ be the $(d-1)$-face collapsed in the $i$-th collapsing step, and let $k$ be its mark. If some face $\tau_j\in T$ has a neighbor with mark $<k$ then $\Pr({\cal{Q}}^{\tau_j}_i)=0$, $\Pr({\cal{Q}}_i)=0$ and the inequality clearly holds. Otherwise, every face in $T$ is affected in a collapsing step where the collapsed $(d-1)$-face has mark $k$. Hence the probability of the event ${\cal{Q}}^{\tau_j}_{i}$ depends only on the random order selected for marking step $k$.  Specifically $\Pr{\cal{Q}}^{\tau_j}_i=\frac{1}{\alpha_j}$, where $\alpha_j$ be the number of neighbors of $\tau_j$ with mark $k$.
$$\Pr({\cal{Q}}_i)\ge \frac{1}{\sum_J\alpha_j}$$
where $J$ is the set of indices $j$ for which $\alpha_j > 1$. But for all $j$,
$$\prod_j\Pr({\cal{Q}}^{\tau_j}_i)=\prod_J\Pr({\cal{Q}}^{\tau_j}_i)=\prod_J\frac{1}{\alpha_j}$$
and the conclusion follows, since $\frac{1}{\sum_J\alpha_j} \ge \prod_J\frac{1}{\alpha_j}$.
\end{proof}

In view of~(\ref{Q^{t}_i}) and Lemma~\ref{Q_i}, we see that if $i\ll \beta_{r-1}{n\choose d}/d$ then $\Pr({\cal{Q}}^{\tau'}_i)= 1-o(1)$ and hence $\Pr({\cal{Q}}_i)=1-o(1)$.

We can finally calculate $\mathbb E(Y_i)$, the expected number of new free $(d-1)$ faces added in a collapsing step. We do this in terms of the events $S^j_i$.




It is convenient for us to condition on the almost sure event ${\cal{Q}}_i$.

 For every $1\le j\le d-1$ we write
\begin{eqnarray}\label{S^j_i}
\nonumber & &\Pr(S^j_i)=\Pr(S^j_i|{\cal{Q}}_i)\Pr({\cal{Q}}_i)+\Pr(S^j_i|\bar{{\cal{Q}}_i})\Pr(\bar{{\cal{Q}}_i})\le \\\nonumber
& &\le\Pr(S^j_i|{\cal{Q}}_i)+1-\Pr({{\cal{Q}}_i})\le\\\nonumber
& &\le (1+o(1)) {d\choose j}(1-x)^{d-j}{x}^{j}
\end{eqnarray}
To see this, notice that conditioned on ${\cal{Q}}_i$ the event $S^j_i$ happens if $\Delta(\tau')=1$ for exactly $j$ of the affected faces $\tau'$.

Notice $\Pr(Y_i=j)=\Pr(S_i^j)$, hence
\begin{equation}\label{shesh}
\mathbb{E}[Y_i]\le(1+o_n(1)) dx=(1+o_n(1)) d(\beta_{r-1})^{d-1}c\gamma_{r+1}
\end{equation}

\begin{claim}\label{Y_i<1}
For $i\ll \beta_{r-1}{n\choose d}/d$ there holds $\mathbb{E}[Y_i]<1.$ 
\end{claim}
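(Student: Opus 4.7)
The plan is to reduce the claim directly to a deterministic inequality about the iterates of the recursion $\gamma_{t+1}(c)=\exp(-c(1-\gamma_t(c)^d))$. By the bound (\ref{shesh}), we already have
\[\mathbb{E}[Y_i]\le (1+o_n(1))\, d\,c\,(\beta_{r-1})^{d-1}\gamma_{r+1},\]
valid in the regime $i\ll \beta_{r-1}\binom{n}{d}/d$ where Lemma~\ref{Q_i} and (\ref{Q^{t}_i}) yield $\Pr({\cal Q}_i)=1-o(1)$. So the only thing left to do is to choose the integer $r$ so that the right-hand side is strictly below $1$.

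First I would recall that the map $g(\gamma)=\exp(-c(1-\gamma^d))$ is continuous and monotone increasing on $[0,1]$, so the sequence $\gamma_0=0,\gamma_1,\gamma_2,\ldots$ is monotone increasing and bounded by $1$, hence converges to the smallest positive fixed point $\gamma^\star(c)$ of $g$. Writing $\beta^\star:=1-\gamma^\star$, continuity gives
\[d\,c\,(\beta_{r-1})^{d-1}\gamma_{r+1}\;\xrightarrow{r\to\infty}\; d\,c\,(\beta^\star)^{d-1}\gamma^\star.\]
So to finish it suffices to check the strict inequality $d c\,(\beta^\star(c))^{d-1}\gamma^\star(c)<1$ for every $c>c_d$.

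The last step is where I would appeal to the characterization of $c_d$ from \cite{ALLM}: the threshold $c_d$ is precisely the value of $c$ at which the limiting quantity $dc(\beta^\star(c))^{d-1}\gamma^\star(c)$ equals $1$, while for $c>c_d$ the smallest positive fixed point $\gamma^\star(c)$ exists and makes this product strictly less than $1$. (On the other side, for $c<c_d$ the sequence still tends to $1$, matching the fact that the collapse process a.s.\ completes in that range, as established in \cite{AL,ALLM}.) Given $c>c_d$, pick $\epsilon>0$ with $dc(\beta^\star)^{d-1}\gamma^\star\le 1-2\epsilon$, and then pick $r=r(c,\epsilon)$ large enough so that $dc(\beta_{r-1})^{d-1}\gamma_{r+1}\le 1-\epsilon$; this is possible by the above convergence. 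Substituting into (\ref{shesh}), for all $i\ll \beta_{r-1}\binom{n}{d}/d$,
\[\mathbb{E}[Y_i]\le (1+o_n(1))(1-\epsilon)<1,\]
as required.

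The main obstacle, and the only genuinely nontrivial point, is confirming that the limiting ratio crosses $1$ exactly at $c=c_d$: this is a matter of matching our expression with the variational characterization of $c_d$ used in \cite{ALLM}. Everything else is monotone convergence of the fixed-point iteration together with the bounds on $\Pr({\cal Q}_i)$ and $\Pr(S_i^j)$ that were already established above.
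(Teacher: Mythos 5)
Your reduction of the claim to the deterministic inequality $d\,c\,(\beta_{r-1})^{d-1}\gamma_{r+1}<1$ via (\ref{shesh}), and the passage to the limit $r\to\infty$ along the monotone fixed-point iteration, are exactly the paper's route. The gap is in your final step: the inequality $d\,c\,(\beta^{\star})^{d-1}\gamma^{\star}<1$ for $c>c_d$ is \emph{not} the characterization of $c_d$ in \cite{ALLM}, and it cannot simply be cited. In \cite{ALLM} the constant $c_d$ is characterized as the critical value at which $f_c(t)=1-e^{-ct^d}-t$ first acquires a root in $(0,1)$, equivalently $c_d=\min_{t\in(0,1)}g(t)$ with $g(t)=-\ln(1-t)/t^d$. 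What follows directly from that is only the \emph{equality} $d\,c_d\,\beta^{d-1}\gamma=1$ at $c=c_d$: it is the first-order condition $g'=0$ at the minimizer, where the double root sits. The statement you actually need --- that $h(c):=d\,c\,B(c)^{d-1}(1-B(c))$ drops strictly below $1$ as soon as $c>c_d$ --- is precisely the content of the paper's proof of this claim, and it requires an argument; the paper supplies one by differentiating $h$ with respect to $B$ along the constraint $c=g(B)$ and finding $\dot h\propto \ln(1-B)/B^{2}+1/B<0$. You correctly flag this as ``the only genuinely nontrivial point,'' but then leave it unproved.

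For completeness, the missing step has a short proof in the spirit of your setup. Since $g'(t)=\frac{1}{(1-t)t^d}-\frac{d\,g(t)}{t}=\frac{1-d\,g(t)\,t^{d-1}(1-t)}{(1-t)t^{d}}$, the inequality $h(c)<1$ is equivalent to $g'(B(c))>0$. The function $g$ has a unique critical point in $(0,1)$, a minimum, and for $c>c_d$ the equation $g(t)=c$ has two solutions $b(c)<B(c)$ straddling that minimizer; hence the larger root $B(c)$ lies on the strictly increasing branch of $g$ and $g'(B(c))>0$ strictly. Note also that you use tacitly that the increasing iteration $\gamma_t$ converges to $1-B(c)$ (rather than to $1-b(c)$ or to $1$); this is true because $\beta_0=1-e^{-c}>B(c)$ and the $\beta$-iteration decreases to the largest fixed point below its starting value, but it again rests on $f_c$ having the two roots $b<B$ for $c>c_d$ --- which is the fact to import from \cite{ALLM}, not the product inequality itself.
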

\begin{proof}
Recall that $d\cdot\beta^{d-1}c_d\gamma=1$ where $\beta=\lim_{r\to\infty} \beta_r(c_d))$ and $\gamma=1-\beta$. We turn to show that $d\cdot\beta_{r-1}^{d-1}c\gamma_{r+1}<1$ for $c>c_d$. To this end we recall and slightly extend some analysis from~\cite{ALLM, AL}. The role of the parameter $c_d$ is revealed by analyzing the function $f_c(t)=1-e^{-c t^d}-t$. As it turns out, the behavior of this function changes significantly as the parameter $c$ varies around $c_d$ which is the range in which we work. 

Concretely we note that for $c>c_d$ the function $f_c$ has two roots in the range $1>t>0$. We denote these roots by $0< b(c)< B(c)< 1$. Note that $B(c)=\lim_{k\to\infty} \beta_k(c)$. The fact that $b(c), B(c)$ are roots of $f_c$ yields $c=g(B(c))=g(b(c))$ where $g(t)=\frac{-\ln(1-t)}{t^d}$. Therefore $g'(b(c))b'(c)=g'(B(c))B'(c)=1$. (Prime denotes derivative w.r.t $c$). We claim that $B$ (resp. $b$) is an increasing (resp. decreasing) function of $c$, which would follow if $g'(b(c))<0<g'(B(c))$ . This is indeed so since, as is easily verified, $g$ has one extremal point in $(0,1)$ which is a minimum, and $0<b<B<1$.

Let $h(c)= d\cdot B(c)^{d-1}\cdot c\cdot(1-B(c))$, and recall that $h(c_d)=1$. As we show next, $h(c)<1$ for $c$ that is a little larger than $c_d$. We denote derivatives w.r.t $B$ by an upper dot, and we prove this claim by showing that $\dot{h}<0$. By choosing $r$ large enough we obtain $d(\beta_{r-1})^{d-1}c\gamma_{r+1}<1$. Together with~(\ref{shesh}) this implies the claim.

We take the derivative w.r.t $B$ of the relation $c=g(B)$:
\[
\dot{c}=\frac{1}{B(c)^d(1-B(c))}+\frac{d\cdot\ln(1-B(c))}{B(c)^{d+1}} = \frac{1}{B(c)^d(1-B(c))}-\frac{d\cdot c}{B(c)}.
\]

But
\[
\dot{h}=d\left(((d-1)B(c)^{d-2}-dB(c)^{d-1})c+(B(c)^{d-1}-B(c)^d)\dot{c}\right)=
\]
\[=-c\cdot B(c)^{d-2}+\frac{1}{B(c)}=
\frac{\ln(1-B(c))}{B(c)^2}+\frac{1}{B(c)}
\]
which is negative, since $-\ln(1-u)>u$ for all $u$.

\end{proof}
To complete our proof we need to show that for some $i\le \eta{n\choose d}$ small enough, a.s $X_i=0$.
As was done in~\cite{Molloy}, we define a series of random variables $Z_0,Z_1,\dots$ (that are very similar to $X_0,X_1,\dots$) as follows. We let $Z_0=X_0$. For every $i>0$, $Z_i:=Z_{i-1}-1+Y'_i$, where $Y'_i$ has the same distribution as $Y_i$. In other words, for every $0\le j\le d$ the random variable $Z_i$ takes the value $Z_{i-1}-1+j $ with probability $\Pr(S^j_i)$. The simple but crucial fact is that if $Z_i<0$ then $X_i=0$. By choosing $r$, the number of phases in the first epoch, large enough we can make $\mathbb{E}[X_0]=\delta {n\choose d}$ with an arbitrarily small $\delta >0$. As shown in Claim \ref{Y_i<1}, there exists an $\epsilon$ such that $\mathbb{E}[Y_i]=\mathbb{E}[Y'_i]\le 1- \epsilon$. Thus for $i>\frac{2\delta}{\epsilon} {n\choose d}$ $$\mathbb{E}[Z_i]=\mathbb{E}[X_0]-i+dxi<-\delta {n\choose d}.$$ The desired conclusion follows now from a version of Azuma's inequality from \cite{McDiarmid}: 
\begin{theorem}
\label{mcazuma}
Let $X_1\dots,X_n$ be independent random variables taking values in a set $A$. Suppose that the function  $f:A^n \rightarrow \Rea$ has the property that
$$
|f(t)-f(t')| \leq \epsilon_k,
$$
if $t,t'\in A^n$ differ only in their $k$-th coordinates. Then, for every $l>0$:
\begin{equation}
\label{azumeq}
\prob[|f(X_1,\dots ,X_n)-\mathbb{E}[f(X_1,\dots ,X_n)]|\ge l]\le 2e^{-\frac{2\cdot l^2}{\sum_k \epsilon_k^2}}
\end{equation}
\end{theorem}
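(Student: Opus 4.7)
The plan is to prove this bounded-differences concentration inequality by the classical Doob-martingale argument due to McDiarmid. First I introduce the Doob martingale
$$ M_k = \mathbb{E}[f(X_1,\ldots,X_n) \mid X_1,\ldots,X_k], \qquad k=0,1,\ldots,n, $$
so that $M_0 = \mathbb{E}[f]$ and $M_n = f(X_1,\ldots,X_n)$. The differences $V_k = M_k - M_{k-1}$ form a martingale difference sequence with respect to the filtration generated by $X_1,\ldots,X_k$, and their sum telescopes to $f(X_1,\ldots,X_n)-\mathbb{E}[f(X_1,\ldots,X_n)]$.

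Next I would show that, conditional on $X_1,\ldots,X_{k-1}$, the random variable $V_k$ is supported in an interval of length at most $\epsilon_k$. Fixing values $x_1,\ldots,x_{k-1}$ and setting
$$ g(x) = \mathbb{E}[f(x_1,\ldots,x_{k-1},x,X_{k+1},\ldots,X_n)], $$
the independence of the $X_j$'s together with the bounded-differences hypothesis gives $|g(x)-g(x')|\le \epsilon_k$ for all $x,x'\in A$, since the integrand inside the expectation differs by at most $\epsilon_k$. Because $V_k = g(X_k) - \mathbb{E}[g(X_k)\mid X_1,\ldots,X_{k-1}]$, its conditional oscillation is at most $\epsilon_k$.

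Now I invoke Hoeffding's lemma: a mean-zero random variable $V$ supported in an interval of length $L$ satisfies $\mathbb{E}[e^{\lambda V}]\le e^{\lambda^2 L^2/8}$ for every real $\lambda$, which is a short convexity estimate. Applied conditionally, this yields
$$ \mathbb{E}[e^{\lambda V_k}\mid X_1,\ldots,X_{k-1}] \le e^{\lambda^2 \epsilon_k^2/8}. $$
Iterating via the tower property gives $\mathbb{E}[\exp(\lambda\sum_k V_k)]\le \exp(\lambda^2 \sum_k \epsilon_k^2/8)$. A standard Markov-Chernoff step followed by optimizing in $\lambda$ (the best choice being $\lambda = 4l/\sum_k \epsilon_k^2$) produces the one-sided tail bound $\exp(-2l^2/\sum_k \epsilon_k^2)$. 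Repeating the argument for $-f$ and taking a union bound yields the claimed two-sided inequality with the factor of $2$ in front.

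The main obstacle is really the second step: one must carefully use the independence of the $X_j$'s to argue that integrating out $X_{k+1},\ldots,X_n$ preserves the $\epsilon_k$-Lipschitz control on $g$, and hence on the conditional range of $V_k$. Once this is in hand, Hoeffding's lemma and the Chernoff optimization are routine, and independence enters a second time only implicitly through the tower property of conditional expectations used in the iterative MGF bound.
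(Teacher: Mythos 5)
Your proof is correct: this is exactly the standard Doob-martingale argument (bounded conditional oscillation of the increments, Hoeffding's lemma, Chernoff optimization at $\lambda = 4l/\sum_k\epsilon_k^2$, and a union bound for the two-sided statement), and all the constants check out. Note that the paper itself gives no proof of this theorem --- it is quoted as a black box from the cited survey of McDiarmid --- and your argument is precisely the one found there, so there is nothing to compare beyond saying you have correctly reconstructed the reference's proof.
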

Fix an $i$ that is bigger than $\frac{2\delta}{\epsilon} {n\choose d}$ and smaller than $\beta_{r-1}{n\choose d}/d$ and $L_r$. The random variables $Y_1',\dots,Y_i'$ take values in $\{0,\dots,d\}$. Notice $Z_i=f(Y_1',\dots,Y_i')=X_0-i+\sum_{j=0}^i Y_j'$ and for two vectors $t,t'$ that differ in only the $k$-th coordinate $|f(t)-f(t')| \le d$, hence for $l=\delta{n\choose d}$
$$
\Pr[|f(Y_1',\dots ,Y_i')-\mathbb{E}[f(Y_1',\dots ,Y_i')]|\ge l]\le 2e^{-\frac{2\cdot l^2}{i\cdot d^2}}=o(1).
$$
Thus a.s $Z_i<0$, as claimed.

\end{document}